\newcommand{\dd}{\,\text{\rm d}}             
\newcommand{\ee}{\,\text{\rm e}}
\newcommand{\cL}{\mathcal{L}}
\newcommand{\1}{{\bf{1}}}
\theoremstyle{plain}
\newtheorem{theorem}{Theorem}[section]
\newtheorem{lemma}[theorem]{Lemma}
\newtheorem{corollary}[theorem]{Corollary}
\theoremstyle{definition}
\newtheorem{remark}[theorem]{Remark}
\newtheorem{example}[theorem]{Example}
\newtheorem{lemmadefi}[theorem]{Lemma and Definition}
\renewenvironment{proof}[1][] {\noindent {\bf Proof#1.} }{\hspace*{\fill}$\square$\medskip\par}
\newcommand{\ve}{{\varepsilon}}
\newcommand{\N}{{\mathbf N}}
\newcommand{\R}{{\mathbf R}}
\newcommand{\E}{{\mathbf E}}
\newcommand{\F}{{\cal F}}
\renewcommand{\P}{{\mathbf P}}
\author{Xue-Mei Li%
  \thanks{Department of Mathematics, 
          The University of Warwick, 
          Coventry CV4 7AL, 
          UK; \ 
        \small{\tt Xue-Mei.Hairer{\scriptsize @}warwick.ac.uk} Research supported by  the EPSRC (EP/E058124/1).}
\and Michael Scheutzow%
  \thanks{Institut f\"ur Mathematik, MA 7-5, Fakult\"at II, 
        Technische Universit\"at Berlin, 
        Stra\ss e des 17. Juni 136, 10623 Berlin, FRG;  \ 
        \small{\tt ms{\scriptsize @}math.tu-berlin.de}}}
\title{Lack of strong completeness for stochastic flows}
\date{\today}
\begin{document}  \maketitle

\begin{abstract}\noindent
It is well-known that a stochastic differential equation (SDE) on a Euclidean space driven 
by a Brownian motion with Lipschitz coefficients generates a stochastic flow of 
homeomorphisms. When the coefficients are only locally Lipschitz, then a maximal continuous flow still exists 
but explosion in finite time may occur. 
If -- in addition -- the coefficients grow at most linearly, then this flow has the property
that for each fixed initial condition $x$, the solution exists for all times almost surely. If the 
exceptional set of measure zero can be chosen independently $x$, then the maximal flow is called 
{\em strongly complete}. The question, whether an SDE with locally Lipschitz continuous coefficients 
satisfying a linear growth condition is strongly complete was open for many years. 
In this paper, we construct a 2-dimensional SDE
with coefficients which are even bounded (and smooth) and which is {\em not} strongly complete thus answering 
the question in the negative. 

  \par\medskip

  \noindent\footnotesize
  \emph{2000 Mathematics Subject Classification} 
  Primary\, 60H10 
  \ Secondary\, 37C10, 35B27
\end{abstract}

\noindent{\slshape\bfseries Keywords.} Stochastic flow; 
strong completeness; weak completeness; stochastic differential equation; homogenization;

\section{Introduction}\label{intro}
We will assume throughout that $(\Omega,\F,\P)$ is a given probability space. Let us consider the following 
stochastic differential equation (SDE) on $\R^d$
\begin{equation}
\label{sde}
dX_t=\sum_{i=1}^n \sigma_i(X_t) \dd B_t^i+\sigma_0(X_t)\dd t,
\end{equation}
where $B^1,...,B^n$ are independent standard Wiener processes defined on $(\Omega,\F,\P)$ and 
the $\sigma_i$ are locally Lipschitz continuous vector fields and hence the SDE has a unique local solution
for each initial condition $X(0)=x$.

It is well-known that such SDEs
with  global  Lipschitz coefficients  do not only possess a unique global solution for each fixed 
initial condition but also a version of the global solution which is continuous in the initial data,  
\cite{Blagovescenskii-Freidlin}.  
This global  solution generates in fact a {\em stochastic flow of homeomorphisms}
\cite{Kunita-paper}, \cite{Kunita}. 
Furthermore it is  well-known that  for a unique global strong solution to exist, it suffices that 
the coefficients of the SDE satisfy a suitable local regularity condition and a growth condition at infinity -- for example 
a local Lipschitz condition and a linear growth condition.
Local Lipschitz continuity guarantees local existence and uniqueness of solutions as well as continuous dependence of the 
local flow on initial conditions 
while the linear growth condition (which can in fact be weakened a bit by allowing additional logarithmic terms) 
allows us to pass from local to global by a Gronwall's lemma procedure. Both conditions are almost necessary as the lack of 
a local Lipschitz condition can lead to lack of pathwise uniqueness and the lack of linear growth can lead to explosion.  
SDEs which have a global strong solution for each initial condition are said to be {\em complete} or {\em weakly complete}.  
It is well known that a complete SDE need not have a continuous modification of the solution as a function of 
time and the initial data. 
This marks a departure of  the theory of stochastic flows from that of deterministic ordinary differential equations.  
However there is so far only a pitifully small number of examples of complete stochastic differential equations whose 
solutions do not admit a continuous modification as a function of  time and  initial data.  
Not a single such example has coefficients which are 
locally Lipschitz and of linear growth (in spite of a remark in \cite{Fang-Imkeller-Zhang07}
stating the contrary). The basic example is the following:
\begin{eqnarray*}
\dd x_t&=&(y_t^2-x_t^2)\dd B_t^1-2x_ty_t\dd B_t^2\\
\dd y_t&=&-2x_ty_t \dd B_t^1 + (x_t^2-y_t^2)\dd B_t^2,
\end{eqnarray*}
where $B^1,\,B^2$ are independent standard Brownian motions. 
It was first given by Elworthy \cite{Elworthy82} (see also \cite{Carverhill}, \cite{flow-li} for 
further discussion).  This SDE is equivalent to 
$\dd x_t=\dd W_t$ on $\R^2\backslash \{0\}$ for some 2-dimensional Wiener process $W$ 
through the transformation $z\mapsto {1\over z}$ in the complex plane representation.
It is clear that $x+W_t$ does not explode in $\R^2\backslash\{0\}$ for each individual $x$, as a Brownian motion does 
not see single points. 
The unique maximal flow is given by $\{x+W_t(\omega), x\in \R^2\backslash\{0\}\}$ (up to explosion) 
and it explodes for any given $\omega$.

Our aim here is to construct stochastic differential equations which are complete but not {\em strongly complete}, i.e.\ 
which do not admit a continuous modification. In the examples, the lack of strong completeness 
is achieved by rapidly oscillating vector fields.
The example which we will present in the next section shows that even under the additional constraint that 
the equation has no drift and the diffusion coefficient is bounded and $C^{\infty}$, 
there may not exist a global solution flow. Even more, in our example the SDE is driven 
by a single one-dimensional Brownian motion. Note that such examples are clearly impossible for scalar equations, 
so the dimension of the state space of the SDE has to be at least 2.
Our examples are in $\R^2$.

\section{Negative Results}\label{examples}
Below, we will construct an example of an SDE in the plane of the form
\begin{eqnarray}\label{SDE1}
\begin{split}
\dd X(t)&=\sigma(X(t),Y(t))\dd W(t)\\ 
\dd Y(t)&=0,
\end{split}
\end{eqnarray}
which is {\em not} strongly complete and where $\sigma:\R^2 \to (0,\infty)$ is bounded, bounded away from 0  and $C^{\infty}$. 

Before going into details, let us explain the idea of the construction. From \eqref{SDE1} it is 
clear that in our example trajectories move on straight lines parallel to the first coordinate axis. 
If the equation was driven by a family of Brownian motions (rather than a single one) which are indexed by $y \in \R$ and are 
independent for different values of $y$, then clearly the supremum over all solutions at time 1 (say) with initial 
conditions of the form $(0,y)$, $0 \le y \le 1$ would be infinite. Such a modification would of course contradict 
our assumptions but we can (and will) try to approximate this behavior using an equation 
of type \eqref{SDE1} with carefully chosen $\sigma$ (satisfying all properties stated above). Our $\sigma$ will exhibit increasingly 
heavy oscillations when $x \to \infty$ with different frequencies for different values of $y$. Thus we can make sure that for 
different values of $y$, the solutions behave (for large $x$) almost as if they were driven by independent Brownian motions 
-- in spite of the fact that they are all driven by the same Brownian motion. 
If we manage to construct $\sigma$ such that approximate independence sets in 
sufficiently quickly, then we can hope to observe exploding solutions, i.e.\ lack of strong completeness. 
In fact it will turn out 
that in our example, solutions for different values of $y$ will not be asymptotically independent but that solutions can 
be asymptotically written as a sum of two Brownian motions: one which is the same for all $y$ and another one which is 
independent for different $y$. This property suffices to show that strong completeness does not hold.
\subsection{Preliminaries}

The following lemma which is proved in \cite{Kunita}, Theorem 4.7.1 ensures the existence of a maximal (continuous) flow 
generated by the SDE \eqref{sde}.  
\begin{lemmadefi}[Maximal Flow]
Suppose that the vector fields $\sigma_i$  are locally Lipschitz continuous.  Then there exist a function 
$\tau:\R^d \times \Omega \to (0,\infty]$ and a map 
$\phi: \{(t,x,\omega): x \in \R^d,\,\omega \in \Omega,\,t \in [0,\tau(x,\omega))\}\to \R^d$ 
such that the following holds:
\begin{enumerate}
\item  For each $x \in \R^d$, $\phi_t (x,.)$ solves \eqref{sde} with initial condition $x$ on $[0,\tau(x,\omega))$,
\item $\phi_t(x, \omega):   \{(t,x): t<\tau(x,\omega)\}\to \R^d$ is a continuous function of $(t,x)$; 
\item for each $x$, $\limsup_{t \to \tau (x,\omega)}|\phi_t(x, \omega)| = \infty$ on  $\{\omega: \tau(x, \omega)<\infty\}$.
\end{enumerate}
The map $\phi$ is called a {\em maximal flow}. $(\phi,\tau)$ are unique up to a null set. 
If, for each $x \in \R^d$, we have $\tau(x,\omega)=\infty$ almost surely, then we call the 
SDE (or the maximal flow) {\em complete} or {\em weakly complete}. If, moreover, there exists a set $\Omega_0$ such that $\tau(x,\omega)=\infty$ 
for all $x \in \R^d$ and all $\omega \in \Omega_0$, then the SDE or the maximal flow are called {\em strongly complete}.
\end{lemmadefi}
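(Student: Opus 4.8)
The plan is to build $(\phi,\tau)$ by the standard localization procedure, patching together globally Lipschitz flows defined on an exhausting sequence of balls. First I would fix, for each $n\in\IN$, a smooth cut-off $\chi_n:\R^d\to[0,1]$ with $\chi_n\equiv 1$ on $B(0,n)$ and $\chi_n\equiv 0$ outside $B(0,n+1)$, and set $\sig_i^{(n)}:=\chi_n\sig_i$. Each $\sig_i^{(n)}$ is globally Lipschitz (it is locally Lipschitz with compact support), so by the classical theory recalled in the introduction (\cite{Blagovescenskii-Freidlin}, \cite{Kunita}) the SDE with these coefficients possesses, for a.e.\ $\om$, a flow $\phi^{(n)}$ defined for all $t\ge 0$ and all $x\in\R^d$ and jointly continuous in $(t,x)$; this continuous version comes from Kolmogorov's criterion applied to the moment bounds $\IE\,|\phi^{(n)}_t(x)-\phi^{(n)}_t(y)|^p\le C_{n,p}(t)\,|x-y|^p$ together with the analogous bound for time increments. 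I would then introduce the exit times
\[
  \tau_n(x,\om):=\inf\{t\ge 0:|\phi^{(n)}_t(x,\om)|\ge n\}\qquad(\inf\varnothing:=\infty);
\]
each $\tau_n(x,\cdot)$ is an $\{\F_t\}$-stopping time, and by continuity of $x\mapsto\phi^{(n)}_{\cdot}(x,\om)$ the map $x\mapsto\tau_n(x,\om)$ is lower semicontinuous.

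The crucial fact is \emph{consistency}: pathwise uniqueness for the (merely) locally Lipschitz SDE \eqref{sde} forces $\phi^{(m)}_{\cdot}(x,\om)=\phi^{(n)}_{\cdot}(x,\om)$ on $[0,\tau_n(x,\om)]$ whenever $m>n$, since on that interval both trajectories stay in $\overline{B(0,n)}$, where $\sig_i^{(m)}=\sig_i^{(n)}=\sig_i$; in particular $n\mapsto\tau_n(x,\om)$ is nondecreasing, and strictly increasing until it reaches $\infty$ (at time $\tau_n$ the common value still has norm $n<m$). I would then set
\[
  \tau(x,\om):=\sup_{n}\tau_n(x,\om)=\lim_{n\to\infty}\tau_n(x,\om)\in(0,\infty],
\]
which is positive because $\tau_n(x,\om)>0$ as soon as $n>|x|$, and define $\phi_t(x,\om):=\phi^{(n)}_t(x,\om)$ for any $n$ with $t<\tau_n(x,\om)$. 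By consistency this is well defined; its domain is exactly $\{(t,x,\om):t<\tau(x,\om)\}$ (any such $(t,x,\om)$ has $t<\tau_n(x,\om)$ for large $n$), and $t\mapsto\phi_t(x,\om)$ is continuous on $[0,\tau(x,\om))$ for each fixed $x$.

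It then remains to verify the three properties. For (1): given $t_0<\tau(x,\om)$, choose $n$ with $t_0<\tau_n(x,\om)$; then $\phi_{\cdot}(x,\om)=\phi^{(n)}_{\cdot}(x,\om)$ on $[0,t_0]$ with values in $B(0,n)$ where $\sig_i^{(n)}=\sig_i$, so $\phi_{\cdot}(x,\om)$ solves \eqref{sde} on $[0,t_0]$, and $t_0$ was arbitrary. For (3): if $\tau(x,\om)<\infty$ while $\limsup_{t\to\tau(x,\om)}|\phi_t(x,\om)|<\infty$, pick $M'<\infty$ with $|\phi_t(x,\om)|\le M'$ on $[0,\tau(x,\om))$ and $n>M'$; since $\tau_n(x,\om)\le\tau(x,\om)<\infty$ and $|x|\le M'<n$ we have $|\phi^{(n)}_{\tau_n(x,\om)}(x,\om)|=n$, and consistency makes this value appear along the path $\phi_{\cdot}(x,\om)$ --- either as $\phi_{\tau_n(x,\om)}(x,\om)$ when $\tau_n(x,\om)<\tau(x,\om)$, or as $\lim_{t\uparrow\tau(x,\om)}\phi_t(x,\om)$ when $\tau_n(x,\om)=\tau(x,\om)$ --- contradicting $|\phi_t(x,\om)|\le M'<n$ throughout $[0,\tau(x,\om))$. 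Property (2) is the delicate one: fixing $\om$ and $(t_0,x_0)$ with $t_0<\tau(x_0,\om)$, pick $t_1\in(t_0,\tau(x_0,\om))$; then $\sup_{s\le t_1}|\phi_s(x_0,\om)|<\infty$, so I choose $n$ exceeding it and also large enough that $\tau_n(x_0,\om)>t_1$, whence $\phi^{(n)}_s(x_0,\om)=\phi_s(x_0,\om)$ for $s\le t_1$ and $\sup_{s\le t_1}|\phi^{(n)}_s(x_0,\om)|<n$; the map $x\mapsto\sup_{s\le t_1}|\phi^{(n)}_s(x,\om)|$ is continuous, hence $<n$ on some neighbourhood $V$ of $x_0$, so $\tau_n(x,\om)>t_1$ for $x\in V$, and therefore $\phi=\phi^{(n)}$ on the neighbourhood $[0,t_1)\times V$ of $(t_0,x_0)$, where $\phi^{(n)}$ is jointly continuous. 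Uniqueness of $(\phi,\tau)$ up to a null set then follows from pathwise uniqueness for \eqref{sde}.

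The only genuinely delicate step --- and the reason this statement is simply quoted from \cite{Kunita} --- is the globally Lipschitz input: one must arrange, on a \emph{single} null set valid simultaneously for all $n$, both the joint $(t,x)$-continuity of each $\phi^{(n)}$ (a Kolmogorov-continuity argument) and the consistency identities for all rational initial points, and then pass to all $x$ by continuity. That bookkeeping is precisely what upgrades "a modification continuous in $x$ for each fixed $t$" to genuine joint continuity on the random open domain as in (2); once it is in place, the localization scheme and the verifications of (1)--(3) above are routine.
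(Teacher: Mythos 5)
Your proposal is correct, and it is essentially the standard truncation--localization argument (globally Lipschitz cut-offs, exit times, consistency via pathwise uniqueness, patching on a single null set) that underlies the proof of Theorem 4.7.1 in \cite{Kunita}, which is all the paper itself offers: it cites that theorem rather than giving a proof. You also correctly identify and address the one delicate point, namely arranging joint continuity of each truncated flow and the consistency identities simultaneously outside a single null set before passing from rational to arbitrary initial points.
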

Usually, flows are assumed to have two time parameters (an additional one for the starting time) and to satisfy a corresponding 
composition property, but in this paper we will not dwell on this.

We review briefly some positive results for strong completeness of the SDE (\ref{sde})  
in terms of growth conditions on the coefficients of the SDE.  
For simplicity assume that   the vector fields are $C^2$ and consider the derivative 
equation:
\begin{equation}
\dd v_t=\sum_i D\sigma_i(x_t)(v_t)\dd B_t^i+D\sigma_0(x_t)(v_t)\dd t.
\end{equation}
If $T_x\phi_t(v)$ is the solution to the derivative equation with initial value $v$, 
the SDE is strongly complete if it does not explode starting from one starting point and if for some $p>n-2$, 
$\sup_{x\in K}\E \sup_{s\le t} |T_t\phi_s|^p1_{s<\tau(x)}$ is finite for every compact set $K$  \cite{flow-li}. 
For $n=2$, it is sufficient to take $p=1$. In terms of the vector fields $\sigma_i$ there is the following 
theorem summarized from  Theorem 5.1, which is valid for SDEs on manifolds, and Lemma 6.1  in  \cite{flow-li}. 
\begin{theorem}
Let $\cL$ be the generator of the SDE \eqref{sde} and $\sigma^\cL $ its symbol so 
$\sigma^\cL (dg, dg)={1\over 2} \cL(g^2)-g\cL g$.
If $g$ is a Lyapunov function in  the sense that ${\mathcal L} g+{1\over 2}\sigma^\cL(dg,dg)\le c$ 
and $\lim_{x\to \infty} g(x)=\infty$ then the SDE is strongly complete if the solution 
from some initial point exists globally  and  if  $|D\sigma_i(x)|^2 \le g(x)$ and 
$2{\langle D \sigma_0(x)(v),v\rangle }\le   g(x)|v|^2$ for all $v\in \R^d$ .
\end{theorem}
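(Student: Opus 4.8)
The plan is to deduce the theorem from the moment criterion recalled immediately above it, so that the whole task is to verify, for every compact $K\subset\R^d$ and every $t>0$, the bound
\[
\sup_{x\in K}\E\Big[\sup_{s\le t}|T_x\phi_s|^{p}\,\1_{s<\tau(x)}\Big]<\infty
\]
for the exponent $p$ of that criterion (which may be taken equal to $1$ in dimension two); together with the hypothesis that some trajectory is global, the quoted result then gives strong completeness. I would split this into three steps: (i) weak completeness together with a tail estimate for $g(\phi_\cdot)$, obtained from the Lyapunov function; (ii) an It\^o estimate for the logarithmic growth of $|T_x\phi_s|$ which, via the two derivative bounds, reduces the required moment bound to a single exponential-integrability property of $\int_0^t g(\phi_s)\dd s$; (iii) that exponential-integrability property, which is where the precise form of the Lyapunov condition is used and which I expect to be the main obstacle.

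For (i), It\^o's formula together with the identity $\sigma^\cL(dg,dg)=\tfrac12\cL(g^2)-g\cL g$ --- which identifies the density of the quadratic variation of $g(\phi_t)$ as $2\sigma^\cL(dg,dg)(\phi_t)$ --- shows that for every $0<\lambda\le\tfrac12$ the function $u:=\exp(\lambda g)$ satisfies
\[
\cL u=\lambda u\big(\cL g+\lambda\,\sigma^\cL(dg,dg)\big)\le\lambda u\big(\cL g+\tfrac12\sigma^\cL(dg,dg)\big)\le\lambda c\,u ,
\]
using $\sigma^\cL(dg,dg)\ge0$. Hence $e^{-\lambda ct}u(\phi_t)$ is a nonnegative local supermartingale up to the explosion time; stopping at $\tau_m:=\inf\{s:g(\phi_s)\ge m\}$ and applying the supermartingale maximal inequality gives $\P(\tau_m\le t)\le e^{-\lambda m}e^{\lambda g(x)+\lambda ct}$. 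Since $g\to\infty$ at infinity one has $\tau_m\uparrow\tau(x)$, so $\P(\tau(x)\le t)=0$ for every $t$ and $x$; in particular the hypothesis that some trajectory is global is automatic and is stated only to fit the form of the quoted criterion. Taking $\lambda=\tfrac12$ in the same estimate shows that $\sup_{s\le t}g(\phi_s)$ has exponentially small tails, $\P\big(\sup_{s\le t}g(\phi_s)\ge m\big)\le e^{-m/2}e^{g(x)/2+ct/2}$, and, by Fatou, $\E_x[e^{\lambda g(\phi_s)}]\le e^{\lambda cs}e^{\lambda g(x)}$ for $\lambda\le\tfrac12$.

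For (ii), fix a unit vector $v$ and set $v_s:=T_x\phi_s(v)$, which solves the variational equation $dv_s=\sum_i D\sigma_i(\phi_s)(v_s)\,dB_s^i+D\sigma_0(\phi_s)(v_s)\,ds$. Applying It\^o's formula to $\log(1+|v_s|^2)$, the drift equals $\tfrac{2\langle v_s,D\sigma_0(\phi_s)v_s\rangle}{1+|v_s|^2}+\sum_i\tfrac{|D\sigma_i(\phi_s)v_s|^2}{1+|v_s|^2}$ minus a nonnegative It\^o correction term; the two pointwise hypotheses $2\langle D\sigma_0(x)v,v\rangle\le g(x)|v|^2$ and $|D\sigma_i(x)|^2\le g(x)$ bound it by $(n+1)g(\phi_s)$, while the quadratic variation of the martingale part $N$ is bounded by $4n\int_0^s g(\phi_r)\dd r$. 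Thus $\log(1+|v_s|^2)\le\log 2+N_s+(n+1)\int_0^s g(\phi_r)\dd r$. Raising to the power $p/2$, taking $\sup_{s\le t}$ and expectations, and repeatedly using Cauchy--Schwarz to peel off exponential local martingales of the form $\exp\big(\theta N_s-\tfrac{\theta^2}{2}\langle N\rangle_s\big)$ (supermartingales, hence of expectation $\le1$), the required moment bound reduces to
\[
\sup_{x\in K}\ \E\Big[\exp\Big(C\int_0^t g(\phi_s)\dd s\Big)\,\1_{t<\tau(x)}\Big]<\infty
\]
for a constant $C=C(p,n)$ produced by these manipulations.

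Step (iii) is the heart of the matter and, I expect, the main obstacle, since the derivative bounds genuinely allow $|v_s|$ to expand at rate $\sqrt{g(\phi_s)}$ in the worst case, so nothing weaker than exponential control of $\int_0^t g(\phi_s)\dd s$ will do. For short time horizons it follows directly from (i): Jensen's inequality gives $\exp\!\big(C\!\int_0^h g(\phi_s)\dd s\big)\le\frac1h\!\int_0^h\exp(Chg(\phi_s))\dd s$, and the bound $\E_x[e^{\lambda g(\phi_s)}]\le e^{\lambda cs}e^{\lambda g(x)}$ with $\lambda=Ch\le\tfrac12$ yields $\E_x\big[\exp(C\!\int_0^h g(\phi_s)\dd s)\big]\le e^{Cch^2}e^{Chg(x)}$, which is finite and bounded for $x\in K$ provided $Ch\le\tfrac12$. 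For arbitrary $t$ one iterates over sub-intervals of length $h<1/(2C)$ using the Markov property, feeding the output of each step back as the Feynman--Kac weight for the next. The subtle point --- and the part I would expect to require the most care --- is that no bound uniform in the time horizon is available (for sufficiently rapidly growing $g$ the left-hand side above grows in $t$), so what must be propagated through the iteration is only finiteness together with local-in-$x$ boundedness, and the correct object to carry along is a Feynman--Kac supersolution for $\exp\!\big(C\!\int_0^\cdot g(\phi_s)\dd s\big)$ rather than a bare $e^{\lambda g}$. Once (iii) is established, (ii) delivers the derivative moment bound uniformly over compacts, and the quoted criterion then yields strong completeness.
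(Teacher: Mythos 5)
First, a point of reference: the paper itself gives \emph{no} proof of this theorem --- it is explicitly presented as a summary of Theorem 5.1 and Lemma 6.1 of \cite{flow-li} --- so there is no in-paper argument to compare your route against; your proposal has to be judged on its own. Steps (i) and (ii) are fine: the identity $\cL e^{\lambda g}=\lambda e^{\lambda g}\big(\cL g+\lambda\,\sigma^\cL(dg,dg)\big)$ and the resulting supermartingale argument correctly give weak completeness and the bound $\E_x[e^{\lambda g(\phi_s)}]\le e^{\lambda c s}e^{\lambda g(x)}$ for $\lambda\le\tfrac12$, and the reduction via $\log(1+|v_s|^2)$ of the derivative moment bound to exponential integrability of $\int_0^t g(\phi_s)\dd s$ is a correct use of the two derivative hypotheses. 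The genuine gap is step (iii), and you have located it yourself but not closed it. The Lyapunov hypothesis only yields exponential moments of $g(\phi_s)$ of order up to $\tfrac12$, and the iteration you propose does not improve on the one-shot Jensen bound: writing $F_k(x):=\E_x\big[\exp\big(C\int_0^{kh}g(\phi_s)\dd s\big)\big]$ and assuming inductively $F_k(x)\le A_k e^{\lambda_k g(x)}$, the Markov property followed by Cauchy--Schwarz gives $F_{k+1}(x)\le A_{k+1}e^{\lambda_{k+1}g(x)}$ only with $\lambda_{k+1}=\lambda_k+Ch$, while each step requires $2\lambda_k\le\tfrac12$. The exponent accumulates linearly, so the scheme is constrained by $Ct\le\tfrac14$ --- exactly the small-time regime you already cover directly --- and nothing is gained for large $t$. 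Appealing to ``a Feynman--Kac supersolution for $\exp\big(C\int_0^\cdot g(\phi_s)\dd s\big)$'' is circular: the existence of a finite, locally bounded supersolution of $\partial_t u=\cL u+Cgu$ is essentially equivalent to the exponential integrability you are trying to establish, and you neither construct one nor show it exists for the specific constant $C(p,n)$ that step (ii) produces.

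Two further remarks. In dimension $2$ one could try to rescue the argument by noting that any $p>0$ is admissible in the quoted criterion and that your constant satisfies $C(p,n)=O(p)$ as $p\to0$, so that for each fixed horizon $t$ one may choose $p=p(t)$ with $C(p,n)\,t\le\tfrac14$; you do not make this observation, and in general dimension the constraint $p>n-2$ forbids it, so it would not prove the theorem as stated. More fundamentally, the cited proof in \cite{flow-li} does not pass through an unconditional Khasminskii-type estimate $\E\exp\big(C\int_0^t g(\phi_s)\dd s\big)<\infty$; the hypotheses are designed so that the Lyapunov function and the derivative bounds combine into a single controlled quantity (the role of Lemma 6.1 there), precisely to avoid the obstruction your step (iii) runs into. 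As written, your argument establishes the conclusion only for $Ct$ small, and the passage to arbitrary $t$ --- the heart of the theorem --- is missing.
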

Examples of such Lyapunov functions include $g(x)=1+\ln (1+|x|^2)$ and $g(x)=x^\ve$.  For a recent result on 
strong completeness, see \cite{Fang-Imkeller-Zhang07}. 
For earlier works, see  also \cite{Baxendale} and \cite{Carverhill-Elworthy}.
For results on strong completeness for stochastic delay differential equations, the reader is referred to \cite{MS03}.

Let us explain the completeness and strong completeness concepts using stopping times. 
First note the following observation.  
Let $U_1\subset U_2\subset U_3\subset ... $ be an exhausting sequence of bounded open subsets of $\R^d$. 
Let $\tau_n(x)$ be the first exit time of the solution, starting from a point $x$  from $U_n$.   
If there exists a non-increasing sequence of  $\delta_n$ such that $\sum \delta_n=Ò\infty$ and 
$\P\{\tau_n(x)\le t\}\le ct^2$ for any  $ t\le \delta_n$ and $x\in U_{n-1}$,
then an application of the Borel-Cantelli Lemma shows that weak completeness holds \cite{infinity-li}. 
We state the corresponding elementary lemma for strong completeness with  converse whose essence will be used  
in the proof for the claim in the example we will construct.

\begin{lemma}\label{stopping times}
Take $\phi_t(x,\omega)$ to be the maximal flow and let  $K$ be a compact set and $\tau^K_n:=\inf\{t>0:\phi_t(K) \nsubseteq U_n\}$. 
Define  $\tau^K=\inf_{x\in K} \tau(x)$.
If  for  two sequences $\{a_n\}$ and $\{b_n\}$ with $\sum_{n}a_n=\infty$ and $\sum b_n<\infty$,
$$P\{ \tau_n^K-\tau_{n-1}^K \le a_n, \tau_{n-1}^K<\infty\} \le b_n,$$
then $\tau^K$ is infinite and if this property holds for every compact set $K$, then we have strong completeness.  
 
Conversely, let $\{a_n\}$ and $\{b_n\}$ be two summable sequences.  Let $T_j$ be finite random  times such that
 $\tau^K\le \sum_j T_j$, then 
 $\tau^K<\infty$ almost surely if 
 $$\P\{ T_n\ge a_n\} \le b_n.$$
\end{lemma}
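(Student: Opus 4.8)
The plan is to derive both halves of the lemma from the first Borel--Cantelli lemma; the only genuinely analytic ingredient is the identification of the compact-set explosion time $\tau^K$ with $\sup_n \tau_n^K$, and once that is in hand everything else reduces to bookkeeping with summable series.

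I would start with that identification. Since $U_{n-1}\subseteq U_n$ the exit times $\tau_n^K$ are non-decreasing in $n$, so $\sup_n \tau_n^K$ exists in $(0,\infty]$. For $t<\tau_n^K$ every trajectory $\phi_\cdot(x,\cdot)$ with $x\in K$ stays inside the bounded set $U_n$ on $[0,t]$, and property (3) of the maximal flow then forbids explosion by time $t$; letting $n$ vary gives $\tau(x,\cdot)\ge\sup_n\tau_n^K$ for every $x\in K$, hence $\tau^K\ge\sup_n\tau_n^K$. Conversely, if $t<\tau^K$ then $\{(s,x):0\le s\le t,\ x\in K\}$ is a compact subset of the open domain of the jointly continuous map $\phi$ (property (2)), so its image is bounded and therefore contained in some $U_n$, which forces $\tau_n^K\ge t$; thus $\sup_n\tau_n^K\ge\tau^K$. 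I would only remark in passing that the $\tau_n^K$ are bona fide random variables, being exit times of a continuous flow.

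For the first assertion I would set $E_n:=\{\tau_n^K-\tau_{n-1}^K\le a_n,\ \tau_{n-1}^K<\infty\}$, note $\P(E_n)\le b_n$ with $\sum_n b_n<\infty$, and use Borel--Cantelli to get a full-measure set off $\limsup_n E_n$. On that set there is a random index $N$ with $\omega\notin E_n$ for all $n\ge N$: if $\tau_{N-1}^K=\infty$ we are done because $\tau^K\ge\tau_{N-1}^K$; otherwise, as long as $\tau_{n-1}^K<\infty$, the failure of $E_n$ forces $\tau_n^K-\tau_{n-1}^K>a_n$, and either some later $\tau_m^K$ is already $\infty$ or a telescoping estimate gives $\tau_M^K>\tau_{N-1}^K+\sum_{n=N}^M a_n\to\infty$ (this is where $\sum_n a_n=\infty$ enters); in all cases $\tau^K=\sup_M\tau_M^K=\infty$ almost surely. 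To pass from this to strong completeness I would apply the conclusion to the closed balls $K_m=\{x:|x|\le m\}$ and take $\Omega_0:=\bigcap_{m\ge1}\{\tau^{K_m}=\infty\}$, a full-measure set on which $\tau(x,\cdot)=\infty$ for every $x$, since each $x$ lies in some $K_m$.

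The converse is a shorter direct Borel--Cantelli argument: with $F_n:=\{T_n\ge a_n\}$ one has $\P(F_n)\le b_n$ summable, so off $\limsup_n F_n$ there is $N$ with $T_n<a_n$ for all $n\ge N$; since each $T_j$ is almost surely finite, $\sum_j T_j=\sum_{j<N}T_j+\sum_{j\ge N}T_j\le\sum_{j<N}T_j+\sum_j a_j<\infty$, whence $\tau^K\le\sum_j T_j<\infty$ almost surely. I expect the main obstacle to lie not in any of these Borel--Cantelli steps but in making the preliminary identification $\tau^K=\sup_n\tau_n^K$ fully rigorous --- in particular the lower semicontinuity of $x\mapsto\tau(x,\omega)$, which is precisely what allows a compact family of non-exploding trajectories to have bounded image --- together with the (routine) measurability of the exit times; both are standard consequences of properties (2) and (3) of the maximal flow.
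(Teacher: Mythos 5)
Your proof is correct. Note that the paper actually gives no proof of this lemma at all --- it is labelled ``elementary'' and the preceding paragraph only alludes to a Borel--Cantelli argument --- and your write-up supplies exactly that intended argument: the identification $\tau^K=\sup_n\tau^K_n$ (using property (3) of the maximal flow in one direction and joint continuity plus lower semicontinuity of $\tau(\cdot,\omega)$ in the other), followed by two routine applications of the first Borel--Cantelli lemma and the telescoping/exhaustion steps. Nothing in your argument deviates from what the authors evidently have in mind, and the points you flag as requiring care (measurability of the exit times, openness of the domain of $\phi$) are indeed the only ones needing the cited properties of the maximal flow.
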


\subsection{A Bunch of Lemmas}

Lemma \ref{homo} below is the key to the construction of our example. While known results in 
homogenization theory state convergence in law of the solutions of a sequence of SDEs like \eqref{homogen} 
to a Brownian motion (with a certain {\em effective} 
diffusion constant) we are not aware that the asymptotics of the joint laws of the solutions has been 
investigated in the  literature. The proof of Lemma \ref{homo} will use the following lemma.

\begin{lemma}\label{convergence}
Let $X^{\ve}=\big( X_1^{\ve}, X_2^{\ve},...\big),\,\ve>0$ be a family of continuous local martingales 
starting at 0. Let 
$B_1,B_2,...$ be independent standard Brownian motions, $\alpha_{ij}\in \R,\,i,j \in \N$ such that 
$\sum_j \alpha_{ij}^2 <\infty$ for all $i \in \N$, 
$V_i:=\sum_{j=1}^{\infty} \alpha_{ij}B_j,\,i \in \N$, and $V=(V_1,V_2,...)$. 
If the quadratic variation $[X_k^{\ve},X_l^{\ve}]_t$ converges in law 
to $[V_k,V_l]_t=t\sum_{j=1}^{\infty} \alpha_{kj}\alpha_{kl}$ for all $k,l \in \N,\, t \ge 0$, 
then $X^{\ve}$ converges to $V$ weakly as $\ve \to 0$.
\end{lemma}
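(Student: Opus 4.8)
\medskip
\noindent\textbf{Proof strategy for Lemma~\ref{convergence}.}
The plan is to reduce the statement to a finite-dimensional functional martingale central limit theorem and then bootstrap from finite-dimensional weak convergence to weak convergence in the countable product. First I would pin down the candidate limit. Since $\sum_j\alpha_{ij}^2<\infty$, the series $V_i=\sum_j\alpha_{ij}B_j$ converges almost surely, uniformly on compact time intervals (by Doob's $L^2$-inequality the partial sums are Cauchy in $L^2(\P;C([0,T],\R))$, which gives a.s.\ uniform convergence along a subsequence and then for the whole sequence), so each $V_i$ is a well-defined continuous $L^2$-martingale for the filtration generated by all the $B_j$, with $[V_i,V_j]_t=t\sum_k\alpha_{ik}\alpha_{jk}$ (the formula in the statement contains an obvious typo). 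Consequently, for each fixed $d$ the vector $(V_1,\dots,V_d)$ is, in law, the unique continuous centred Gaussian martingale null at $0$ with this deterministic matrix-valued quadratic covariation --- that is, a Brownian motion in $\R^d$ with covariance matrix $A^{(d)}=\big(\sum_k\alpha_{ik}\alpha_{jk}\big)_{i,j\le d}$.

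For the finite-dimensional step I fix $d$ and apply a standard functional central limit theorem for continuous local martingales to $M^{\ve}:=(X_1^{\ve},\dots,X_d^{\ve})$, a vector of continuous local martingales starting at $0$. The hypothesis says that $[X_i^{\ve},X_j^{\ve}]_t$ converges in law to the constant $c_{ij}(t):=t\sum_k\alpha_{ik}\alpha_{jk}$ for every $t\ge0$ and $i,j\le d$; convergence in law to a constant is convergence in probability, so $[M^{\ve,i},M^{\ve,j}]_t\to c_{ij}(t)$ in probability for each $t$. The matrix-valued map $t\mapsto(c_{ij}(t))_{i,j\le d}$ is continuous, vanishes at $0$, and has positive semidefinite increments because $c(t)-c(s)=(t-s)A^{(d)}$ with $A^{(d)}$ a Gram matrix. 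The martingale CLT then gives $M^{\ve}\Rightarrow(V_1,\dots,V_d)$ in $C([0,\infty),\R^d)$ as $\ve\to0$.

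Finally I would pass to the infinite-dimensional statement. Regard $X^{\ve}$ and $V$ as random elements of the Polish space $E:=C([0,\infty),\R)^{\N}$. Tightness of $\{X^{\ve}\}$ in $E$ is automatic from one-dimensional tightness: each family $\{\mathrm{Law}(X_i^{\ve})\}_{\ve}$ converges, hence is tight in $C([0,\infty),\R)$, so given $\eta>0$ one can choose compact $K_i\subset C([0,\infty),\R)$ with $\P\{X_i^{\ve}\notin K_i\}<\eta 2^{-i}$ for all $\ve$, and then $\prod_iK_i$ is compact in $E$ with $\P\{X^{\ve}\notin\prod_iK_i\}<\eta$. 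By Prohorov's theorem, along any sequence $\ve_n\downarrow0$ some subsequence of $X^{\ve_n}$ converges weakly; since the coordinate projections $E\to C([0,\infty),\R^d)$ are continuous, the preceding paragraph forces every finite-coordinate projection of the subsequential limit to equal that of $V$, and since a Borel probability measure on $E$ is determined by the laws of its finite-coordinate projections, the limit must be $\mathrm{Law}(V)$. Hence $X^{\ve}\Rightarrow V$.

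The only point requiring real care is the first reduction: checking that the hypotheses of the functional martingale central limit theorem genuinely hold --- in particular that convergence in law of the brackets to a \emph{deterministic} limit upgrades to convergence in probability, and that the limiting bracket is continuous with positive semidefinite increments. After that the argument is soft, since tightness in a countable product follows from one-dimensional tightness and the limit is identified through standard facts about Gaussian martingales.
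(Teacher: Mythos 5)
Your proposal is correct and follows the same route as the paper, which simply cites the functional martingale central limit theorem (Theorem VIII.2.17 of Jacod--Shiryaev) for the finite-dimensional marginals and remarks that the extension to sequences is an immediate corollary; you reconstruct exactly that: the martingale CLT with deterministic continuous bracket limits for each fixed $d$, then tightness in the countable product plus identification of the limit by its finite-dimensional projections. You also correctly flag the typo in the stated formula for $[V_k,V_l]_t$, which should read $t\sum_{j}\alpha_{kj}\alpha_{lj}$.
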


\begin{proof} 
This follows from Theorem VIII.2.17 in \cite{JS} (the theorem is formulated for a family of $\R^n$-valued 
$X^{\ve}$ rather than  sequences but the statement for sequences is an immediate corollary). 
See also Revuz-Yor \cite{Revuz-Yor}.
\end{proof}

\begin{lemma}\label{homo} Let $H_i:{\bf R} \to [0,\infty)$, $i=1,2$ be Lipschitz continuous with 
period 1 and assume that $H_1$ is non-constant and $H_1(x)+H_2(x)>0$ for all $x$.
Let $W_i$, $i=1,2$ be independent standard one-dimensional Brownian motions and $\ve>0$. Consider the SDE
\begin{equation}\label{homogen}
\begin{split}
\dd X^\ve (t)&=H_1\big(\frac{1}{\ve} X^\ve (t)\big) \dd W_1(t)+ H_2\big( \frac{1}{\ve} X^\ve (t)\big) \dd  W_2(t) \\
\;X^\ve (0)&=x.
\end{split}
\end{equation}
There exist  $\hat \alpha,\hat \beta>0$ (not depending on the initial condition $x$) such that the following holds: 
if $(\ve_n)$ is a sequence of 
positive reals satisfying $\ve_{n+1}/\ve_n \to 0$ as $n \to \infty$, then $(X^{\ve_n}-x,X^{\ve_{n+1}}-x,...)$ 
converges weakly to $(\hat \alpha B_0 + \hat \beta B_1,\hat \alpha B_0 + \hat\beta B_2,...)$
as $n \to \infty$, where $B_0,B_1, \dots$ are independent standard Brownian 
motions. 
\end{lemma}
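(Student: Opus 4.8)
The plan is to reduce the weak convergence of the sequence of solutions to the convergence of their pairwise quadratic variations and then invoke Lemma \ref{convergence}. Since the diffusion coefficient $\sqrt{H_1^2 + H_2^2}$ is bounded and bounded away from $0$ (the latter because $H_1(x)+H_2(x)>0$ on the compact period interval forces $H_1^2+H_2^2 \ge c > 0$), each $X^{\ve}$ is a true martingale, in fact a time-change of Brownian motion with $[X^\ve]_t = \int_0^t (H_1^2+H_2^2)(X^\ve(s)/\ve)\,\dd s$. So $X^{\ve_n}-x$ is a continuous local martingale started at $0$, and by Lemma \ref{convergence} it suffices to identify the weak limit of the matrix $\bigl([X^{\ve_k}-x, X^{\ve_l}-x]_t\bigr)_{k,l}$ as $n\to\infty$ (with indices shifting along the sequence). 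The candidate limit is the covariance structure of $(\hat\alpha B_0 + \hat\beta B_k)_k$, namely $[\,\cdot\,]_t = t(\hat\alpha^2 + \hat\beta^2)$ on the diagonal and $t\hat\alpha^2$ off the diagonal, which corresponds in the notation of Lemma \ref{convergence} to coefficients $\alpha_{i0}=\hat\alpha$, $\alpha_{ii}=\hat\beta$, all others zero.

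The diagonal terms are classical one-dimensional homogenization: $[X^{\ve_k}-x]_t = \int_0^t (H_1^2+H_2^2)(X^{\ve_k}(s)/\ve_k)\,\dd s$, and the standard periodic-homogenization argument (e.g.\ via the corrector, or directly via the explicit scale function for a one-dimensional diffusion) gives that $X^{\ve_k}$ converges in law to a Brownian motion with some effective constant $\hat\alpha^2 + \hat\beta^2 =: a$, where $a = \bigl(\int_0^1 \frac{\dd u}{H_1^2(u)+H_2^2(u)}\bigr)^{-1}$ (harmonic mean of $H_1^2+H_2^2$ over a period); in particular $[X^{\ve_k}-x]_t \to ta$ in law (in fact in probability, uniformly on compacts). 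It then remains to define $\hat\beta^2$ as the effective constant of the SDE $\dd Z = H_1(Z/\ve)\,\dd W_1$ driven by $W_1$ alone (equivalently the harmonic mean of $H_1^2$), set $\hat\alpha^2 := a - \hat\beta^2$, and check $\hat\alpha^2 > 0$: this is where non-constancy of $H_1$ enters, since by the strict Jensen inequality the harmonic mean of $H_1^2+H_2^2$ strictly exceeds — hmm, one must be careful about the direction; the point is that the effective constant is \emph{not} additive across the two noises, and the non-degeneracy $\hat\alpha^2>0$ should follow from a strict convexity argument applied to the fact that $H_1$ genuinely oscillates.

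The main obstacle is the \textbf{off-diagonal} term $[X^{\ve_k}-x, X^{\ve_l}-x]_t$ for $k \ne l$, i.e.\ the joint behavior of two solutions driven by the \emph{same} pair $(W_1,W_2)$ but homogenized at \emph{vastly different} scales $\ve_k \gg \ve_l$. Writing $X^{\ve_k} = \hat\alpha M + (\text{fast part}_k)$ heuristically, the cross quadratic variation should pick up only the common martingale $M$ (built from $W_1, W_2$ jointly, with weights reflecting how $H_1, H_2$ average) and the $\hat\beta$-parts should decorrelate because they live at incommensurate frequencies. To make this rigorous I would exploit the scale separation $\ve_{k+1}/\ve_k \to 0$: conditionally freeze the slow solution $X^{\ve_k}$ on the time scale over which $X^{\ve_l}$ equilibrates, so that $X^{\ve_l}$ sees an essentially static environment and its increments' correlation with $W_1, W_2$ self-averages; the cross-variation $\int_0^t (H_1^2+H_2^2)^{1/2}(X^{\ve_k}(s)/\ve_k)(H_1^2+H_2^2)^{1/2}(X^{\ve_l}(s)/\ve_l)\,\dd s$ — more precisely the relevant covariation integrand $H_1(X^{\ve_k}/\ve_k)H_1(X^{\ve_l}/\ve_l) + H_2(X^{\ve_k}/\ve_k)H_2(X^{\ve_l}/\ve_l)$ — then converges to $t$ times $\langle H_1\rangle\langle H_1\rangle + \langle H_2\rangle\langle H_2\rangle$ rescaled appropriately, which one identifies with $t\hat\alpha^2$. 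Controlling this requires a two-scale ergodic averaging estimate (a quantitative mixing bound for the fast diffusion against a frozen slow one), and I expect that establishing the off-diagonal convergence — together with the consistency of the resulting $\hat\alpha^2$ with the decomposition $\hat\alpha^2 + \hat\beta^2 = a$ — will be the technical heart of the proof; everything else is assembling these pieces through Lemma \ref{convergence}.
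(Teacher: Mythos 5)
Your overall strategy is exactly the paper's: reduce to convergence of the pairwise quadratic variations via Lemma \ref{convergence}, handle the diagonal terms by one-dimensional periodic homogenization, and handle the off-diagonal cross-variations by a two-scale averaging argument exploiting $\ve_{n+1}/\ve_n \to 0$. However, there are two genuine problems. First, your identification of the constants is wrong. The correct constants are read off from the invariant measure $\mu(\dd y) = v^{-1}\big(H_1^2(y)+H_2^2(y)\big)^{-1}\dd y$ of the rescaled process $z^\ve(t)=\ve^{-1}X^\ve(t\ve^2)$ projected to the torus: writing $\bar f := \int_0^1 f\,\dd\mu$, one has $\hat\alpha^2 = \bar H_1^{\,2} + \bar H_2^{\,2}$ (this is what the cross-variation computation actually produces) and $\hat\beta^2 = a - \hat\alpha^2 = \int H_1^2\dd\mu - \bar H_1^{\,2} + \int H_2^2\dd\mu - \bar H_2^{\,2}$, where $a=v^{-1}$ is the harmonic mean of $H_1^2+H_2^2$ (your diagonal computation is correct). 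Your proposal to define $\hat\beta^2$ as the effective constant of $\dd Z = H_1(Z/\ve)\,\dd W_1$ alone (the harmonic mean of $H_1^2$) is incorrect: in the case $H_2\equiv 0$ --- which is precisely how the lemma is applied later in the paper --- your $\hat\beta^2$ would equal $a$ and force $\hat\alpha=0$, contradicting the statement. Relatedly, you attach the non-constancy of $H_1$ to proving $\hat\alpha^2>0$, but it is $\hat\beta^2>0$ that needs it, via strict Jensen applied to $\int H_1^2\dd\mu > \big(\int H_1\dd\mu\big)^2$; positivity of $\hat\alpha$ is immediate since $\bar H_1>0$. Note also that all averages here are against $\mu$, not Lebesgue measure, a point your $\langle\cdot\rangle$ notation leaves ambiguous.

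Second, the off-diagonal estimate --- which you correctly identify as the technical heart --- is only described, not proved. The paper's execution is concrete: subtract the means (set $g:=f-\bar f$), reducing the claim to $\int_0^t g\big(X^\ve(s)/\ve\big)g\big(X^{\tilde\ve}(s)/\tilde\ve\big)\dd s \to 0$ in probability; then partition $[0,t]$ into $N=\lceil 1/(\ve\tilde\ve)\rceil$ blocks chosen so that on each block the slow rescaled process $z^\ve$ is nearly frozen (its time argument varies by $O(\tilde\ve/\ve)\to 0$, controlled via its uniformly bounded volatility) while the fast process $z^{\tilde\ve}$ runs for a rescaled time of order $\ve/\tilde\ve\to\infty$ and averages $g$ to zero by the ergodic theorem. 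Some such quantitative block decomposition is needed; "conditionally freeze the slow solution" is the right idea but, as written, your argument stops short of a proof.
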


\begin{proof} By the previous lemma, it suffices to show that there exist $\hat\alpha, \hat\beta >0$ such that 
$[X^\ve-x]_t$ and $[X^\ve-x,X^{\tilde \ve}-x]_t$ 
converge to $(\hat\alpha^2 + \hat\beta^2)t$ respectively $\hat\alpha^2t$ in law for each $t \ge 0$ as $\ve \to 0$ and $\tilde \ve \to 0$ 
such that $\tilde \ve/\ve \to 0$.
Set $z^\ve(t)={1\over  \ve} X^\ve(t\ve^2)$ and  let $W_i^\ve(t)=\frac{1}{\ve}W_i(t\ve^2)$, $i=1,2$, be the rescaled Brownian motions.
Then $z^\ve(t)$ satisfies:
$$\dd z^\ve(t)= H_1\left(z^\ve(t)\right )  \dd  W_1^\ve(t)
+ H_2\left(z^\ve(t)\right )  \dd  W_2^\ve(t).$$
The projection to $[0,1]$  is an ergodic Markov process with invariant measure $\mu$:
$$\mu(\dd y)={1\over v}
{\dd y\over H_1^2(y)+H_2^2(y)} $$
for $v={ \int_0^1 {1\over H_1^2(y)+H_2^2(y)} \dd y}$ the normalising constant.
If $f$ is a continuous periodic function with period 1, denote by  $\bar f$ its average:
$$\bar f= \int_0^1 f(x)\dd\mu(x).$$ Then by the law of large numbers for $z^\ve$, for each fixed $t \ge 0$,
\begin{equation}
\begin{split}
&\lim_{\ve  \to 0}  \int_0^t f\Big(\frac{1}{\ve} X^\ve(s)\Big)\dd s=\lim_{\ve  \to 0}
\int_0^t f\Big(z^\ve\big(\frac{s}{\ve^2}\big)\Big)\dd s \\
& =\lim_{\ve  \to 0}
\ve^2\int_0^{t\over \ve^2}  f\big(z^\ve(r)\big)\dd r 
=t\bar f.
\end{split}
\label{average}
\end{equation}
The convergence is in $L^p$ for every $p>0$. This applies in particular to  $H_1$ and $H_2$. 
The zero mean martingale diffusion process  $X^\ve(\cdot)-x$ has  quadratic variation 
$$[X^{\ve}-x]_t=\int_0^t \big[ H_1({1\over \ve}X^\ve(s))]^2 \dd s+\int_0^t [H_2({1\over \ve}X^\ve(s))\big] ^2 \dd s$$
which -- due to \eqref{average} -- converges in $L^1$ to $\beta_1^2 t$, where 
$$\beta_1:=\sqrt{\left(\int_0^1 \left(H_1^2(x)+H_2^2(x) \right) \dd \mu(x)\right)}=v^{-1/2}.$$
Next, we show that
\begin{equation}\label{jointqv}
[X^\ve-x,X^{\tilde \ve}-x]_t \to \hat\alpha^2 t \qquad \mbox{ for } \hat\alpha:=(\bar H_1^2 +\bar H_2^2)^{1/2} \mbox{ as } \ve,\widetilde \ve/\ve \to 0.  
\end{equation}
We have
$$
[X^\ve-x,X^{\widetilde \ve}-x]_t = \int_0^t H_1(\frac{1}{\ve} X^\ve(s))H_1({1\over {\widetilde \ve}} X^{\widetilde \ve}(s))\dd s
+\int_0^t H_2({1\over \ve}X^\ve(s))  H_2({1\over\widetilde \ve} 
X^{ \widetilde \ve}(s))\dd s.
$$
Let $f:\R \to \R$ be continuous and periodic with period 1 and $g(s):=f(s)-\bar f$. Then
\begin{align*}
\int_0^t &f\big(\frac 1\ve X^\ve(s)\big) f\big(\frac 1{\widetilde \ve} X^{\widetilde \ve}(s)\big) \dd s
=\int_0^t g\big(\frac 1\ve X^\ve(s)\big) g\big(\frac 1{\widetilde \ve} X^{\widetilde \ve}(s)\big) \dd s\\
&+\bar f^2 t + \bar f \int_0^t g\big(\frac 1\ve X^\ve(s)\big) \dd s +  
\bar f \int_0^t g\big(\frac 1{\widetilde \ve} X^{\widetilde \ve}(s)\big) \dd s.
\end{align*}
The sum of the last three terms converges to $\bar f^2 t$ by \eqref{average}, 
so in order to prove \eqref{jointqv}, it suffices to show that the first term converges to zero in probability.
Let $N:= \lceil 1/(\ve \tilde \ve) \rceil$, $b_i:=it/N, i=0,1,...,N$, and $C:= \sup_{x \in [0,1]} |g(x)|$. Then

\begin{align*}
&\Big| \int_0^t g\big(\frac 1\ve X^\ve(s)\big) g\big(\frac 1{\widetilde \ve} X^{\widetilde \ve}(s)\big) \dd s \Big|
= \Big| \sum_{i=0}^{N-1} \int_{b_i}^{b_{i+1}} g(z^\ve(s/\ve^2))g(z^{\widetilde \ve}(s/{\widetilde \ve^2}))\dd s \Big|\\
&= \widetilde \ve^2  \Big| \sum_{i=0}^{N-1} \int_{b_i/\widetilde \ve^2}^{b_{i+1}/\widetilde \ve^2 }
 g(z^\ve(s\widetilde \ve^2/\ve^2))g(z^{\widetilde \ve}(s))\dd s \Big|\\
&\le \widetilde \ve^2   \sum_{i=0}^{N-1}  \Big| g\big(z^\ve\big(\frac{b_i}{\ve^2}\big)\big)\Big|   
\Big| \int_{b_i/\widetilde \ve^2}^{b_{i+1}/\widetilde \ve^2 } g(z^{\widetilde \ve}(s))\dd s\Big|\\
&\qquad \qquad+
\widetilde \ve^2   \sum_{i=0}^{N-1} \int_{b_i/\widetilde \ve^2}^{b_{i+1}/\widetilde \ve^2 } 
\Big|g\big(z^\ve \big(\frac{\widetilde \ve^2s}{\ve^2}\big)\big)- 
g\big(z^\ve\big(\frac{b_i}{\ve^2}\big)\big) \Big|\,\Big| g(z^{\widetilde \ve}(s))\Big|\dd s\\
&\le \widetilde \ve^2C\sum_{i=0}^{N-1} \Big|\int_{b_i/\widetilde \ve^2}^{b_{i+1}/\widetilde \ve^2 } g(z^{\widetilde \ve}(s))\dd s\Big|
+ \widetilde \ve^2C\sum_{i=0}^{N-1} \int_{b_i/\widetilde \ve^2}^{b_{i+1}/\widetilde \ve^2 } 
\Big|g\big(z^\ve \big(\frac{\widetilde \ve^2s}{\ve^2}\big)\big)- g\big(z^\ve\big(\frac{b_i}{\ve^2}\big)\big)\Big| \dd s. 
\end{align*}
The expected value of the first term converges to 0 as $\ve \to 0$ by the ergodic theorem since $\tilde \ve/\ve \to 0$ and 
$\E\big|g\big(z^\ve \big(\frac{\widetilde \ve^2s}{\ve^2}\big)\big)- g\big(z^\ve\big(\frac{b_i}{\ve^2}\big)\big)\big|$ converges to zero 
as $\ve \to 0$ uniformly for all $i$, $s \in [b_i\tilde \ve^{-2},b_{i+1}\tilde \ve^{-2}]$ since $z^{\ve}$ has 
uniformly bounded volatility. This proves \eqref{jointqv}.  

All that remains to show is that $\hat\beta:=\sqrt{\beta_1^2-\hat\alpha^2}>0$ but this is true (by Jensen's inequality) since 
$\int H_2^2(x) \dd \mu(x) \ge \bar H_2^2$ and $\int H_1^2(x) \dd \mu(x) > \bar H_1^2$ since $H_1$ is non-constant.
Therefore the proof of the lemma is complete.
\end{proof}

We will need the following elementary lemmas. 

\begin{lemma}\label{independent}
Let $W,B^1,B^2,...$ be independent standard Brownian motions and let $\hat\alpha,\hat \beta,a,\delta,S,T>0$. Then
$$
\lim_{n \to \infty} \P\Big(\cup_{i=1}^n \Big( \big\{\sup_{0 \le t \le S} (\hat \beta B_t^i+\hat \alpha W_t) \ge a \big\}  \cap 
\big\{\inf_{0 \le t \le T} (\hat \beta B_t^i+\hat \alpha W_t) \ge -\delta \big\} \Big)  \Big) = 1.
$$
\end{lemma}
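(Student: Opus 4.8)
The union in the statement couples the $n$ events through the single driving motion $W$, so the plan is to first condition on $W$ and thereby reduce to a statement about conditionally independent events. Write $M:=\max\{S,T\}$ and let $G$ be the subset of $C([0,M];\R)$ consisting of paths $f$ with $f(0)=0$, $\sup_{0\le t\le S}f(t)\ge a$ and $\inf_{0\le t\le T}f(t)\ge-\delta$; the $i$-th event in the union is then $A_i:=\{\hat\beta B^i+\hat\alpha W\in G\}$ (paths restricted to $[0,M]$). Since $W,B^1,B^2,\dots$ are independent, conditionally on $W$ the processes $B^1,\dots,B^n$ remain an i.i.d.\ family, so $A_1,\dots,A_n$ are conditionally independent given $W$ and share the common conditional probability $p(W):=\P(A_1\mid W)$, which is a measurable function of $W$ (namely $w\mapsto\P(\hat\beta B^1+\hat\alpha w\in G)$). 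Hence $\P(\bigcup_{i=1}^nA_i\mid W)=1-(1-p(W))^n$, which increases pointwise to $\1_{\{p(W)>0\}}$; by bounded convergence $\lim_{n\to\infty}\P(\bigcup_{i=1}^nA_i)=\P(p(W)>0)$. So it suffices to show $p(W)>0$ almost surely.

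Second, I would prove the stronger fact that $p(w)>0$ for \emph{every} deterministic continuous path $w$ with $w(0)=0$. Fix such a $w$ and choose an explicit comfortable target, e.g.\ $f_0(t):=(a+1)\min\{2t/S,1\}$ on $[0,M]$, which satisfies $f_0(0)=0$, $\sup_{[0,S]}f_0=a+1$ and $\inf_{[0,M]}f_0=0$. Setting $\eta:=\tfrac12\min\{1,\delta\}>0$, any $f$ with $\|f-f_0\|_{\infty,[0,M]}<\eta$ lies in $G$. Therefore
$$
p(w)\ \ge\ \P\big(\|\hat\beta B+\hat\alpha w-f_0\|_{\infty,[0,M]}<\eta\big)\ =\ \P\big(\|B-g\|_{\infty,[0,M]}<\eta/\hat\beta\big),
$$
where $g:=\hat\beta^{-1}(f_0-\hat\alpha w)$ is continuous with $g(0)=0$. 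The right-hand side is strictly positive because the topological support of the law of Brownian motion on $C([0,M];\R)$ under the uniform norm is all of $\{g:g(0)=0\}$. Hence $p(w)>0$, so $\P(p(W)>0)=1$ and the limit in the lemma equals $1$.

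The only genuinely non-routine ingredient is the positivity $\P(\|B-g\|_{\infty}<r)>0$, i.e.\ the Brownian support theorem; if one prefers a self-contained argument, this can instead be obtained by a direct Cameron--Martin estimate (or a piecewise-linear approximation of $g$ together with the Gaussian small-ball bound), which is elementary but a little longer. The conditioning step and the bounded-convergence passage are standard, and the construction of $f_0$ together with the inclusion of its $\eta$-tube in $G$ is a one-line check, so this is where the write-up effort will be minimal.
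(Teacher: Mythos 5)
Your proof is correct, and it follows the same basic strategy as the paper -- condition on the common driving motion $W$ so that the events $A_i$ become conditionally i.i.d.\ -- but the two arguments diverge in how they exploit this. The paper applies Birkhoff's ergodic theorem to the stationary sequence $(\1_{A_i})_i$ to get $\frac 1n\sum_{i=1}^n \1_{A_i} \to \P(A_1\mid\sigma(W))$ a.s., and then simply asserts that this limit is strictly positive almost surely, from which the conclusion follows. You replace the ergodic theorem by the elementary identity $\P(\cup_{i=1}^n A_i\mid W)=1-(1-p(W))^n$ and bounded convergence, which is arguably more transparent, and -- more importantly -- you actually prove the positivity of $p(W)$ that the paper leaves unargued: your tube argument around the explicit path $f_0$, reduced via $g=\hat\beta^{-1}(f_0-\hat\alpha w)$ to the support theorem (or a Cameron--Martin small-ball estimate) for Brownian motion, establishes the stronger statement that $p(w)>0$ for \emph{every} continuous $w$ with $w(0)=0$. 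So your write-up is both a valid alternative and a useful completion of the detail the paper's one-line positivity claim glosses over; the only external input you need is the support theorem, whereas the paper's route needs Birkhoff plus the same (unproved) positivity fact.
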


\begin{proof}
For $i \in \N$, let
$$
A_i:=\{\omega: \sup_{0 \le t \le S} (\hat \beta B_t^i+\hat \alpha W_t) \ge a,\; \inf_{0 \le t \le T} (\hat \beta B_t^i+
\hat \alpha W_t) \ge -\delta\}.
$$
Birkhoff's ergodic theorem implies that 
$$
\lim_{n \to \infty} \frac 1n \sum_{i=1}^{n} \1_{A_i} = \P\big( A_1|\sigma(W)\big) \mbox{ a.s.},
$$
which is strictly positive almost surely, so the assertion of the lemma follows.  
\end{proof}


The following is a quantitative version of the Borel Cantelli lemma, which provides an upper bound, and as a corollary a 
lower bound, for $M$ events   out of $N$ events to happen simultaneously. The lemma was proposed by Martin Hairer who also 
supplied an intuitive proof which our proof is based on.

\begin{lemma}[Hairer's Borel-Cantelli lemma]
\label{BC}
Let $(\Omega,\F,\P)$ be a probability space and  $\{A_i\}, 1\le i\le N$ events with $\P(A_i) = p_i$. Then 
\begin{itemize}
\item  the probability that at least $M$ of the events 
happen simultaneously is smaller or equal to $\sum_{i=1}^N p_i / M$;
\item the probability that at least $M$ of the events $\{A_i\}$
happen simultaneously is at least ${\sum_{i=1}^N p_i-M+1\over N-M+1}$.
\end{itemize}\end{lemma}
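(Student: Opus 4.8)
The plan is to derive both inequalities from the first moment method applied to the counting variable $S:=\sum_{i=1}^N \1_{A_i}$, which by linearity of expectation satisfies $\E S = \sum_{i=1}^N p_i$. The event ``at least $M$ of the $A_i$ occur simultaneously'' is exactly $\{S\ge M\}$, so everything reduces to estimating $q:=\P(S\ge M)$ from above and from below.

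For the first bullet I would simply invoke Markov's inequality: since $S\ge 0$,
\[
q=\P(S\ge M)\le \frac{\E S}{M}=\frac1M\sum_{i=1}^N p_i .
\]
For the second bullet I would split $\E S$ over the partition $\{S\ge M\}\cup\{S<M\}$ and use the trivial pointwise bounds $S\le N$ on the first event and $S\le M-1$ on the second. This gives
\[
\sum_{i=1}^N p_i=\E S\le Nq+(M-1)(1-q)=(M-1)+(N-M+1)q .
\]
Since $M\le N$ forces $N-M+1\ge 1$, solving for $q$ yields $q\ge \dfrac{\sum_{i=1}^N p_i-M+1}{N-M+1}$; when the numerator is non-positive the inequality holds trivially, so no case distinction is actually needed.

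There is essentially no hard step: the only inputs beyond linearity of expectation are Markov's inequality and the elementary splitting above. The one genuine idea is to bound $S$ by $N$ on the event of interest and by $M-1$ on its complement — the sharpest pair of pointwise bounds available, which is also what makes the estimate tight. I would finish by noting that the ``corollary'' mentioned in the statement (a lower bound for $M$ out of $N$ events to occur) is precisely the second bullet, and that in the intended application one chooses the $p_i$ so that $\sum_i p_i$ is of the same order as $N$ while $M$ is a small fraction of $N$, so that the lower bound stays bounded away from $0$.
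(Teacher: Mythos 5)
Your proof is correct and rests on essentially the same first-moment argument as the paper: the first bullet is Markov's inequality applied to $S=\sum_{i=1}^N \1_{A_i}$ in both cases. For the second bullet the paper instead applies the first bullet to the complements $A_i^c$ (at least $M$ of the $A_i$ fail to occur exactly when at least $N-M+1$ of the complements occur), which is algebraically identical to your splitting of $\E S$ over $\{S\ge M\}$ and $\{S\le M-1\}$ with the pointwise bounds $S\le N$ and $S\le M-1$, so the two derivations differ only in presentation.
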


\begin{proof} Let $Q_{M,N}$ be the set of $\omega$ which belong to at least $M$ of the $N$ events from $\{A_i\}$. 
\begin{eqnarray*}
\P(Q_{M,N} )&=&\P\{\omega: \#\{1\le i\le N: \omega\in A_i\} \ge M\}\\
&=&\P\{\omega: \sum_{i=1}^N \1_{A_i}(\omega) \ge M\}
\le{1\over M}\E  \sum_{i=1}^N \1_{A_i} ={1\over M}\sum_{i=1}^N p_i.
\end{eqnarray*}

For the corresponding lower bound denote by $B_i$ the complement of $A_i$ and $q_i:=\P(B_i) = 1-p_i$.
 Let $Q^c_{M,N}$ be the complement of $B_{M,N}$, which is the event that at most $M-1$ of the events $A_i$ happen
 or --  equivalently -- the set on which at least $N-M+1$ events from the $\{B_i: 1\le i\le N\}$ happen. 
It follows from the previous lemma that
 \begin{equation}\nonumber
\P(Q^c_{M,N}) \le {\sum_{i=1}^N q_i \over N-M+1} = {N - \sum_{i=1}^N p_i \over N-M+1}\;,
\end{equation} 
so that 
\begin{equation}\nonumber
\P(Q_{M,N}) \ge 1- {N - \sum_{i=1}^N p_i \over N-M+1} = {\sum_{i=1}^N p_i - M + 1 \over N-M+1}\;,
\end{equation}
as required.
 \end{proof}

\begin{corollary}
\label{corollary2}
Let $0<\alpha \le \beta$. Then, for every $T>0$, $\ve>0$,  
there exists a $\delta>0$, such that for each $m \in \N$, there exists some $N \in \N$
such that the following holds: 
for every sequence $M_1, M_2,\dots$ of martingales with continuous paths on the same space 
$(\Omega,\F,\P)$, starting at zero 
such that $\alpha \le \frac \dd {\dd t} \langle M_i\rangle_t\le \beta $ for all $i$ and $t$,
the stopping time 
$$
\tau:=\inf\big\{t>0: M_i(t) \ge \delta \mbox{ for at least } m \mbox{ different } i \in \{1,...,N\}\big\}
$$
satisfies
$$
\P\{\tau \le T\} \ge 1-\ve.
$$
\end{corollary}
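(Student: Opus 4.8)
The plan is to apply the quantitative Borel--Cantelli lemma, Lemma~\ref{BC}, not to the events ``$M_i$ reaches $\delta$ before time $T$'' but to the events $E_i$ that $M_i$ spends at least a fixed amount $\eta>0$ of time above $\delta$ during $[0,T]$; a pigeonhole argument on occupation times then converts ``many martingales have large occupation above $\delta$'' into ``at some single time $t\le T$ at least $m$ martingales exceed $\delta$'', which is exactly $\{\tau\le T\}$.

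First I would fix $i$ and record that, since $\alpha\le\frac{\dd}{\dd t}\langle M_i\rangle_t\le\beta$, one has $\langle M_i\rangle_t\ge\alpha t$, hence $\langle M_i\rangle_\infty=\infty$, so the Dambis--Dubins--Schwarz theorem provides a standard Brownian motion $W^{(i)}$ on the same space with $M_i(t)=W^{(i)}(\langle M_i\rangle_t)$; moreover $t\mapsto\langle M_i\rangle_t$ is a strictly increasing Lipschitz bijection of $[0,T]$ onto $[0,\langle M_i\rangle_T]$ whose inverse has derivative in $[\beta^{-1},\alpha^{-1}]$. Writing $|A|$ for the Lebesgue measure of $A\subseteq\R$, the substitution $r=\langle M_i\rangle_t$ gives the pathwise inequality
\[
\big|\{t\in[0,T]:M_i(t)\ge\delta\}\big|=\int_0^T\1_{\{W^{(i)}(\langle M_i\rangle_t)\ge\delta\}}\dd t\ \ge\ \frac1\beta\int_0^{\alpha T}\1_{\{W^{(i)}(r)\ge\delta\}}\dd r\,.
\]
The right-hand side is $\beta^{-1}$ times the occupation time of $[\delta,\infty)$ by a standard Brownian motion over $[0,\alpha T]$, whose law does not depend on $i$ or on the particular martingale. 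Since almost every Brownian path spends a strictly positive amount of time in $(0,\infty)$ on $[0,\alpha T]$, I would first choose $\eta>0$ small enough that $\P\big(\big|\{r\in[0,\alpha T]:W(r)>0\}\big|>2\beta\eta\big)>1-\ve/4$, and then choose $\delta>0$ small enough that also $\P\big(\big|\{r\in[0,\alpha T]:0<W(r)<\delta\}\big|\ge\beta\eta\big)<\ve/4$ (possible since these sets decrease to $\varnothing$ as $\delta\downarrow0$). Because $\{W\ge\delta\}=\{W>0\}\setminus\{0<W<\delta\}$, with these choices the event $E_i:=\{\,|\{t\in[0,T]:M_i(t)\ge\delta\}|\ge\eta\,\}$ satisfies $\P(E_i)\ge1-\ve/2$ for \emph{every} admissible sequence; note $\delta$ (and $\eta$) depend only on $T,\ve,\alpha,\beta$, as required.

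Given $m$, I would then set $k:=\lceil mT/\eta\rceil$ and $N:=2k$, and apply the lower bound of Lemma~\ref{BC} to $E_1,\dots,E_N$: since each $\P(E_i)\ge1-\ve/2$, the probability that at least $k$ of them occur is at least $\big(N(1-\ve/2)-k+1\big)/(N-k+1)$, which is $\ge1-\ve$ by the choice $N=2k$. On the event that $E_i$ holds for $k$ distinct indices, adding the occupation times of those $k$ martingales gives $\int_0^T\#\{i\le N:M_i(t)\ge\delta\}\,\dd t\ge k\eta\ge mT$; as the integrand is integer-valued and bounded by $N$, it must take a value $\ge m$ on a set of positive Lebesgue measure, so in particular $\#\{i\le N:M_i(t)\ge\delta\}\ge m$ for some $t\le T$, i.e.\ $\tau\le T$. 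Hence $\P\{\tau\le T\}\ge1-\ve$.

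The step I expect to be the crux is the passage to occupation times. One cannot run the argument with the cheaper events ``$M_i$ hits $\delta$ before $T$'': nothing forces the hitting times of different martingales to overlap, and the natural remedy ``$M_i$ hits $\delta$ and then stays above $\delta$ for some further fixed time'' has probability bounded away from $1$ (Brownian scaling makes the time to reach $\delta$ and the time spent above $\delta$ afterwards of comparable order), so Lemma~\ref{BC}, whose lower bound can never exceed $\sup_i\P(E_i)$, would be useless. Using the occupation time above $\delta$ -- which every admissible martingale accumulates in a definite amount with probability close to $1$ once $\eta,\delta$ are small, and here the two-sided bound $\alpha\le\frac{\dd}{\dd t}\langle M_i\rangle_t\le\beta$ is essential -- is precisely what makes $\P(E_i)$ close to $1$ and lets the pigeonhole argument force a genuine simultaneous crossing.
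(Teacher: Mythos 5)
Your proof is correct and follows essentially the same strategy as the paper's: establish a uniform lower bound on the probability that each $M_i$ spends a fixed amount of time above $\delta$ on $[0,T]$, apply the lower bound of Lemma~\ref{BC} to get many such indices simultaneously, and then pigeonhole on occupation times (the paper phrases this last step as a second application of Lemma~\ref{BC} with normalized Lebesgue measure) to produce a single time at which $m$ martingales exceed $\delta$. The only divergence is in the proof of the uniform occupation-time estimate (inequality \eqref{Lebesgue} in the paper): you compare occupation times directly through the Dambis--Dubins--Schwarz time change and the fact that Brownian occupation time of $(0,\infty)$ is a.s.\ positive, whereas the paper bounds below by the probability of hitting $2\delta$ by time $T/2$ and then staying above $\delta$ for a further time $uT$; both are valid and rest on the same two-sided bound $\alpha\le\frac{\dd}{\dd t}\langle M_i\rangle_t\le\beta$.
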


\begin{proof} 
For $\delta> 0$, let $\lambda_i^{\delta}:= \mu\{0 \le t \le T: M_i(t) \ge \delta\}$, where $\mu$ denotes normalized 
Lebesgue measure on $[0,T]$. 
We claim that there exist $\delta >0$ and $u>0$ such that for all $i \in \N$, we have
\begin{equation}\label{Lebesgue}
\P\big\{\lambda_i^{\delta} \ge u \big\}\ge 1 - \frac {\ve}{2}. 
\end{equation}
Assume that this has been shown. For $k \ge 2$, let 
$$
\Omega_k:=\{\lambda_i^{\delta} \ge u \mbox{ for at least } k \mbox{ different } i \in \{1,...,2(k-1)\} \}.
$$
Then, by Lemma \ref{BC},
$$
\P\big(\Omega_k\big) \ge \frac{2(k-1)(1-\frac \ve 2) - k +1}{2(k-1)-k+1} = 1-\ve.
$$
Invoking Lemma \ref{BC} once more, we see that on $\Omega_k$, there exists some $t \in [0,T]$ such that 
$M_i(t) \ge \delta$ for at least $m$ different $i \in \{1,...,2k-2\}$ provided that the numerator $uk-m+1$ in the formula in 
Corollary  \ref{BC} is strictly positive. Letting $k:=\lceil \frac mu \rceil$ and $N:=2k-2$, 
the assertion of the corollary follows at once. 

It remains to prove \eqref{Lebesgue}. Let $\delta>0$ (we will fix the precise values later). 
For ease of notation, we drop the index $i$ (observe that all estimates below are uniform in $i$).
The martingale $M$ can be represented as a time-changed Brownian motion: $M(t)=W([M]_t)$. Since 
$\frac \dd {\dd t} [M]_t\in [\alpha,\beta]$, we obtain for $a,t>0$
\begin{eqnarray*}
\P(\sup_{s\in [0,t]} M(s)\ge a)&=&\P(\sup_{0\le s\le t} W([M]_s) \ge a)\\
&\ge& \P(\sup_{0\le s\le \alpha t} W(s) \ge a)\\
&=& 2\,\P(W(\alpha t) \ge a) = {2\over \sqrt{2 \pi}}\int_{a/\sqrt{\alpha t}}^\infty \ee^{-{x^2\over 2}} \dd x=:p_0(a,t),
\end{eqnarray*}
and, analogously,
$$
\P(\inf_{s\in [0,t]} M(s)\ge -a)\ge \P(\sup_{0 \le s \le t}W(\beta s) \le a) 
= {2\over \sqrt{2 \pi}}\int_0^{a/\sqrt{\beta t}} \ee^{-{x^2\over 2}} \dd x=:q_0(a,t).
$$
Let $\tilde \tau$ be the first time that $M(t)\ge 2 \delta$. Using the fact that $M(\tilde \tau \wedge c +t)-M(\tilde \tau \wedge c)$ 
also satisfies the assumptions of the corollary for each $c \ge 0$, we get for $u \in (0,\frac 12]$
\begin{eqnarray*}
&&\P\{\lambda^\delta(\omega)\ge u\}\\
&&\ge \P\left(\inf_{\tilde \tau\le t\le \tilde \tau+uT}M(t)\ge \delta \;\Big |\:
\tilde \tau\le {T\over 2}\right)\P\left(\tilde \tau\le {T\over 2}\right)\\
&&\ge q_0(\delta,uT)\, p_0(2\delta,\frac T2).
\end{eqnarray*}
Choosing first $\delta>0$ so small that the second factor is close to $1$ and then choosing  $u>0$ small enough, 
we can ensure that the product is at least $1-\ve/2$ proving \eqref{Lebesgue}, so the proof of the corollary 
is complete. 
\end{proof}

\subsection{The Examples}

\begin{example}
Consider the SDE \eqref{SDE1}. We will start by defining the coefficient $\sigma$ restricted to $\R \times [0,1]$. 
Fix a smooth non-constant, strictly positive 
function $H$ of period one. To construct the example, we subdivide 
the square $[n,n+1]\times [0,1]$ into $M_n$ horizontal strips of width $1/M_n$ each, with $M_n$ 
increasing sufficiently quickly and 
let $\sigma$ be equal to $H$ sped up by a factor depending on the particular strip. 
Thus, the probability that one of the solutions starting from $(n,y)$, $y \in [0,1]$ will reach the next level $(n+1,y)$ within 
a very short time will increase with $n$ allowing us to conclude that strong completeness fails. 
We now state the precise assumptions.

Let $H: \R\to [\frac 12,1]$  be an infinitely differentiable non-constant  
function with period $1$. Assume that all its derivatives vanish at 0. 
Fix a sequence of positive integers $a_i$, $i=0,1,...$ such that $a_0=1$ and $\lim_{i \to \infty} a_{i+1}/a_i = \infty$. 
Assume that $N_0,N_1,N_2,...$ are positive even integers whose values we will fix later. Let 
$M_n:=\prod_{i=0}^n N_i$, $n \in \N_0$ and define
\begin{align}\label{sigma}
\sigma(x,y)= H(a_i x)  \mbox{ if } &i \in \{0,1,...,\frac{N_n}{2}-1\},\;  x \in [n,n+1],\\
&y \in \left[\frac{kN_n+2i}{M_n},\frac{kN_n+ 2i+1}{M_n}\right],\,k=0,...,M_{n-1}-1.\nonumber 
\end{align}
Further, let $\sigma(x,y)=H(x)$ for $x\le 0, y \in [0,1]$. 
On the set where $\sigma$ has been defined, it is clearly bounded, strictly positive, and $C^{\infty}$ 
(since we assumed that all derivatives of $H$ to vanish at zero). 
It is also clear that $\sigma$ 
can be extended to a  $C^{\infty}$ function taking values in $[1/2,1]$ on all of $\R^2$.
We claim that the associated flow is not strongly complete in case 
the integers $N_0, N_1, ...$ are chosen to increase sufficiently quickly. 

Let $\psi_{st}$ denote the $x$-component of the maximal flow $\phi$ of the SDE started at time $s$ ($s\le t$). 
We define a sequence of stopping times $\tau_n$, $n \in \N_0$ 
and intervals $I_n \subseteq [0,1]$ as follows: $\tau_0:=0$, $I_0:=[0,1]$,  
$\tau_{n+1}:= \inf \{t > \tau_n: \sup_{y \in I_n} \psi_{\tau_n t}(n,y)=n+1\}$ and let $I_{n+1} \subseteq I_n$ be some interval 
of the form $\left[\frac {2k}{M_n},\frac{2k+1}{M_n}\right]$ on which the supremum in the definition of $\tau_{n+1}$ is attained 
(note that the supremum is attained for {\em every} point in such an interval if it is attained 
for {\em some} point in the interval). 
Define 
$\tau:=\inf \{t \ge 0: \sup_{y \in [0,1]}\psi_{0 t}(0,y)=\infty\}$. Then $\tau \le \lim_{n \to \infty} \tau_n$ and
it suffices show that $\P\{\tau_{n+1}-\tau_n \ge 2^{-n}\}$ 
is summable over $n$ to deduce that $\P\{\tau < \infty\}>0$ (a straightforward argument then shows that even $\tau < \infty$ 
almost surely).  

Fix $n \in \N$. We will show that we can choose $N_n \in \N$ in such a way that 
$$
\P\{\tau_{n+1}-\tau_n \ge 2^{-n}|\F_{\tau_n}\} \le 2^{-n}.
$$ 

Let $\hat y \in I_n$ and let $M_j^n$ solve the following SDE:
\begin{align*}
\dd M_{j}^n(t)&=\xi_{j}^n(M_{j}^n(t)) \dd W(t),\\ 
 M_{j}^n(0)&=n,
\end{align*}
where
$$
\xi_{j}^n(z):=\left\{ 
\begin{array}{ll}
\sigma(z,\hat y)\; &{\rm if } \, z \le n\\
H(a_j\,z)\; &{\rm if } \, z \ge n.
\end{array}
\right.
$$
Observe that $\xi_j^n$ does not depend on the particular choice of $\hat y \in I_n$ and that 
(up to a shift of the Wiener process $W$) $M_j^n(t)$, $j=0,..,\frac{N_n}{2} -1$ are the solutions of our SDE 
after $\tau_n$ and until $\tau_{n+1}$ on the intervals $ \left[\frac{lN_n+2j}{M_n},\frac{lN_n+ 2j+1}{M_n}\right]$, where     
$l$ is chosen such that $(lN_n+1)/M_n \in I_n$. We need to ensure that for $N_n$ large enough, one of the $M_j^n$ will reach 
the next level $n+1$ within time $2^{-n}$ with probability at least $1-2^{-n}$. Unfortunately, we cannot apply the homogenization 
lemma \ref{homo} directly to the $M_j^n$, since they all have the same diffusion coefficient for $z \le n$. 
Therefore, we will wait at most time $T_n=\frac 12 2^{-n}$ and show that for $N_n$ large, it is very likely, that 
many of the  $M_j^n$ have reached at least level $n+\delta_n$ for some (possibly very small) $\delta_n>0$. We will then apply 
the homogenization lemma only to these $M_j^n$. Of course, this is possible only if the solution does not go back to level $n$ before 
time $\tau_{n+1}$. Lemma \ref{independent} ensures, that with high probability, we can find at least one of the 
remaining $M_j^n$ for which this is true. We now provide the details of the argument.\\

\noindent {\bf Step 1:} We apply Corollary \ref{corollary2} to the martingales $M_j^n-n$, $j=0,1,2,...$  with $T_n=\frac 12 2^{-n}$, 
$\ve_n=\frac 14 2^{-n}$, $\alpha=1/4$, $\beta=1$ and obtain a number $\delta_n>0$ which satisfies (\ref{Lebesgue}) 
in the proof of Corollary \ref{corollary2}. 
We can assume that $\delta_n<1$. \\

\noindent {\bf Step 2:} Now, we define $\tilde M_j^n$, $j \in \N_0$ as the solution of the SDE
\begin{align*}
\dd \tilde M_{j}^n(t)&=H(a_j\,\tilde M_{j}^n(t)) \dd W(t),\\ 
\tilde M_{j}^n(0)&=n+\delta_n.
\end{align*}
Applying Lemma \ref{homo} to $\tilde M_0^n,\tilde M_1^n,...$ with $x=n+\delta_n$, $H_1=H$, $H_2=0$, we see that 
$(\tilde M_k^n-x,\tilde M_{k+1}^n-x,...)$ converges in law to 
$(\hat \alpha B_0+\hat \beta B_1,\, \hat \alpha B_0+\hat \beta B_2,...)$ as $k \to \infty$, 
where $\hat \alpha,\,\hat \beta >0$ and 
$B_0,B_1,...$ are independent standard Wiener processes.\\

\noindent {\bf Step 3:} Next, Lemma \ref{independent} says that there exists some $m_n \in \N$ such that
$$
\P\Big(\bigcup_{i=1}^{m_n} \Big( \big\{\sup_{0 \le t \le \frac 12 2^{-n}} (\hat \alpha B_0(t)+\hat \beta B_i(t)) \ge 1 \big\}  \cap 
\big\{\inf_{0 \le t \le 2^{-n}} (\hat \alpha B_0(t)+\hat \beta B_i(t)) \ge -\frac{\delta_n}{2} \big\} \Big)  \Big) \ge 1-\ve_n.
$$\\

\noindent {\bf Step 4:} Let $\tilde N_n$ be the number in the conclusion of Corollary \ref{corollary2} associated 
to $\delta_n,\ve_n,T_n,$ and $m_n$. 
Thanks to the convergence stated in Step 2, we can find some $k_n \in \N$ such that
for any subset $J \subseteq \{k_n,k_n+1,...,k_n+\tilde N_n-1\}$ of cardinality $m_n$, we have
$$
\P\Big(\bigcup_{i \in J} \Big( \big\{\sup_{0 \le t \le \frac 12 2^{-n}} \tilde M_i^n \ge n+1 \big\}  \cap 
\big\{\inf_{0 \le t \le 2^{-n}} \tilde M_i^n \ge -{\delta_n} \big\} \Big)  \Big) \ge 1-\ve_n.
$$\\

\noindent {\bf Step 5:} Define $N_n:= k_n+\tilde N_n$. Using the strong Markov property and the fact that $\psi$ is 
order preserving (and hence a solution starting at $n+\delta_n$ can never pass a solution starting at a larger value at 
the same time), we obtain for our choice of $N_n$ that
$$
\P\big\{ \tau_{n+1}-\tau_n \ge 2^{-n}\big|\F_{\tau_n}\big\}\le 3\ve_n<2^{-n}
$$
as desired, so the proof is complete.
\hfill $\Box$
\end{example}

Note that if the SDE in the above example is changed into Stratonovich the SDE is strongly complete.
To produces an example in Stratonovitch form, we use two independent Brownian motions.

\begin{example}
Consider the SDE 
\begin{eqnarray}\label{sde1}
\begin{split}
\dd X(t)&=\sigma_1(X(t),Y(t))\dd W_1(t) + \sigma_2(X(t),Y(t))\dd  W_2(t) \\ 
\dd Y(t)&=0,
\end{split}
\end{eqnarray} 
where $W_1,\, W_2$ are two independent standard one-dimensional Brownian motions.
We will construct bounded and $C^{\infty}$ functions $\sigma_1$ and $\sigma_2$ such that 
$\sigma_1^2(x,y)+\sigma_2^2(x,y)=1$ for all $x,y$  such that the associated flow is 
not strongly complete. Note that due to the condition $\sigma_1^2(x,y)+\sigma_2^2(x,y)=1$,
it does not matter if we interpret the stochastic differentials in the It\^o or 
Stratonovich sense.

The construction of the example resembles that of the previous one closely, the only difference being that 
this time, we consider two non-constant $C^{\infty}$ functions $H_1$, $H_2$ taking values in $[1/2,1]$ such 
that $H_1^2(z)+H_2^2(z)=1$ and apply  
Lemma \ref{homo} with these functions $H_1,H_2$ rather than with a single function $H$ as before.\hfill $\Box$
\end{example}

We essentially showed that  we could trace back to and construct a random initial point $x_0(\omega)$ 
which goes out fast enough to explode. This is   true in general: Suppose that there is a maximal flow 
$\{\phi_t(x,\omega), t<\tau(x, \omega)\}$ to the SDE. It is strongly complete if and only if for all 
measurable random points $x(\omega)$ on the state space,  
$\phi_t(x(\omega), \omega)$ exists almost surely for all $t$.

\begin{remark}
It remains an open question whether an SDE with globally Lipschitz diffusion coefficients and 
a drift which is locally Lipschitz and of linear growth admits a global solution flow. 
\end{remark}


\begin{thebibliography}{10}%
\bibliographystyle{abbrv}
\bibitem{Baxendale}
Baxendale, P. (1980) Wiener processes on manifolds of maps. {\em Proceedings of the Royal Society of Edinburgh} 
{\bf 87A}, 127--152.


\bibitem{Blagovescenskii-Freidlin} Blagovescenskii, Ju.N., Freidlin, M.I. (1961). Some properties of diffusion 
processes depending on a 
parameter. {\em Dokl.\ Akad. Nauk} {\bf 138}, 508--511.

\bibitem{Carverhill}  Carverhill, A.P. (1981). A pair of stochastic dynamical systems which have the same infinitesimal 
generator, but of which one is strongly complete and the other is not. University of Warwick Preprint. 

\bibitem{Carverhill-Elworthy} Carverhill, A.P.  and Elworthy, K.D. (1983).  Flows of stochastic dynamical systems: 
the functional analytic approach.
{\em Z. Wahrscheinlichkeitstheorie verw. Gebiete}  {\bf 65}, 245--267.


\bibitem{Elworthy82}
Elworthy, K.D. (1982). Stochastic flows and the $C{\sb{0}}$ diffusion property. {\em Stochastics} {\bf 6}, 233--238. 





\bibitem{Fang-Imkeller-Zhang07} 
   Fang, S.  and Imkeller, P. and  Zhang, T. (2007).  
   \newblock Global flows for stochastic differential equations without global 
Lipschitz conditions.
   \newblock{\em Ann. Probab.}  {\bf 35}, 180--205.     
   

\bibitem{JS}
Jacod, J.\ and Shiryayev, A. (2003).
\newblock
 {\em Limit Theorems for Stochastic Processes.} Second edition. Springer, New York.
   
   \bibitem{Kunita-paper}
Kunita, H. (1981). On the decomposition of solutions of stochastic differential equations. 
In: Stochastic Integrals, ed. D. Williams, pp. 213-255. Lecture Notes in Mathematics 851.   

\bibitem{Kunita} 
Kunita, H. (1990).
\newblock
 {\em Stochastic Flows and Stochastic Differential Equations.}
Cambridge University Press, Cambridge.



\bibitem{flow-li}
Li, Xue-Mei (1994).
  \newblock Strong {$p$}-completeness of stochastic differential equations
              and the existence of smooth flows on noncompact manifolds,
\newblock{\em Probab. Theory Related Fields}, {\bf 100}, 485--511.

\bibitem{infinity-li}
Li, Xue-Mei (1994).
\newblock Properties at infinity of diffusion semigroups and stochastic flows via weak uniform covers.  
\newblock{ \em  J. Potential Anal.}  {\bf 3}, 339--357. 


%

\bibitem{MS03}
 Mohammed, S. and Scheutzow, M. (2003). 
 \newblock
 {The stable manifold theorem for nonlinear 
stochastic systems with memory I: Existence of the semiflow. }
 {\em J. Funct. Anal.}  {\bf 205}, 271--305.
 
\bibitem{Revuz-Yor} Revuz, D. and Yor, M. (1999).  {\em Continuous Martingales and Brownian Motion.} Third edition, 
Springer, New York.
 

\end{thebibliography}
\end{document}